\documentclass[11pt]{amsart}
\usepackage{latexsym,amsmath,amssymb,epsfig,amsthm}
\usepackage{graphicx}
\usepackage{tikz}
\usepackage[enableskew,vcentermath]{youngtab}
\usepackage{hyperref}
\hypersetup{colorlinks=false}
\input epsf
\textwidth=16cm 
\textheight=21cm 
\hoffset=-55pt 
\footskip=25pt
\newtheorem{theorem}{Theorem}[section]
\newtheorem{proposition}[theorem]{Proposition}
\newtheorem{corollary}[theorem]{Corollary}

\newtheorem{definition}[theorem]{Definition}

\newtheorem{remark}[theorem]{Remark}

\newcommand{\sd}{{\rm sd}}

\newcommand{\cC}{{\mathcal C}}

\newcommand{\lL}{{\mathcal L}}

\newcommand{\RR}{{\mathbb R}}

\newcommand{\NN}{{\mathbb N}}

\newcommand{\sm}{{\smallsetminus}}
\begin{document}
\title[Barycentric subdivisions of cubical complexes]
{Face numbers of barycentric subdivisions of cubical 
complexes}

\author{Christos~A.~Athanasiadis}

\address{Department of Mathematics\\
National and Kapodistrian University of Athens\\
Panepistimioupolis\\
15784 Athens, Greece}
\email{caath@math.uoa.gr}

\date{December 18, 2020}
\thanks{ 2010 \textit{Mathematics Subject Classification.} 
Primary 05E45; \, Secondary 26C10, 52B12.}
\thanks{ \textit{Key words and phrases}. 
Barycentric subdivision, cubical complex, $h$-polynomial, 
Eulerian polynomial, real-rootedness.}

\begin{abstract}
The $h$-polynomial of the barycentric subdivision 
of any $n$-dimensional cubical complex with nonnegative 
cubical $h$-vector is shown to have only real roots and 
to be interlaced by the Eulerian polynomial of type 
$B_n$. This result applies to barycentric subdivisions 
of shellable cubical complexes and, in particular, to 
barycentric subdivisions of cubical convex polytopes 
and answers affirmatively a question of Brenti, 
Mohammadi and Welker.
\end{abstract}

\maketitle

\section{Introduction}
\label{sec:intro}

A fundamental problem in algebraic and geometric 
combinatorics is to characterize, or at least obtain 
significant information about, the face enumerating
vectors of triangulations of various topological 
spaces, such as balls and spheres \cite{StaCCA}. 
Face enumerating vectors are often presented in
the form of the $h$-polynomial (see 
Section~\ref{sec:enu} for definitions). Properties 
such as unimodality, log-concavity, 
$\gamma$-positivity and real-rootedness have been of 
primary interest \cite{Ath18, Bra15, Bre94b, Sta89}. 
One expects that the `nicer' the triangulation is 
combinatorially and the space being triangulated is 
topologically, the better the behavior of the 
$h$-polynomial is.

Following this line of thought, Brenti and 
Welker~\cite{BW08} considered an important and well  
studied triangulation in mathematics, namely the 
barycentric subdivision. They studied the 
transformation of the $h$-polynomial of a simplicial 
complex $\Delta$ under barycentric subdivision and 
showed that the resulting $h$-polynomial has only 
real roots (a property with strong implications) for 
every simplicial complex $\Delta$ with nonnegative 
$h$-polynomial. They asked \cite[Question~3.10]{BW08} 
whether the $h$-polynomial of the barycentric 
subdivision of any convex polytope has only real 
roots, suspecting an affirmative answer (see 
\cite[p.~105]{MW17}). This question was 
raised again by Mohammadi and 
Welker~\cite[Question~35]{MW17} and, as is 
typically the case in face enumeration, 
it is far more interesting and more challenging for 
general polytopes and polyhedral complexes, than it 
is for simplicial polytopes 
and simplicial complexes. Somewhat surprisingly, no 
strong evidence has been provided in the 
literature that such a result may (or may not) hold 
beyond the simplicial setting. One should also note 
that barycentric subdivisions of boundary complexes 
of polytopes form a special class of flag 
triangulations of spheres and that the real-rootedness 
property fails for the $h$-polynomials of this more 
general class of triangulations in dimensions higher 
that four \cite{Ga05}. At present, it is unclear
where the borderline between positive and negative 
results lies.

Mohammadi and Welker (based on earlier discussions 
with Brenti) suggested the class of cubical polytopes 
as another good test case; see \cite[p.~105]{MW17}. Cubical 
complexes and polytopes are important and mysterious 
objects with highly nontrivial combinatorial properties
(see, for instance, \cite{Ad96, BBC97, Jo93, JZ00}). 
They have been studied both for their own independent 
interest, and for the role they play in other 
areas of mathematics. Given the intricacy of their 
combinatorics, it comes as no surprise that the 
question of Brenti and Welker turns out to be more 
difficult for them than for simplicial complexes. The 
following theorem provides 
the first general positive result on this question, 
since \cite{BW08} appeared, and suggests that an 
affirmative answer should be expected at least for 
broad classes of nonsimplicial convex polytopes (or 
even more general cell complexes and posets).
\begin{theorem} \label{thm:main}
The $h$-polynomial of the barycentric subdivision 
of any shellable cubical complex has only real roots. 
In particular, barycentric subdivisions of cubical 
polytopes have this property.
\end{theorem}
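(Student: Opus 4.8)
The plan is to derive Theorem~\ref{thm:main} from the sharper statement recorded in the abstract: if $\Delta$ is any $n$-dimensional cubical complex whose cubical $h$-vector is nonnegative, then $h(\sd\Delta; t)$ has only real roots and, moreover, is interlaced by the Eulerian polynomial of type $B_n$. This reduction uses two inputs. First, boundary complexes of cubical convex polytopes are shellable, by the line-shelling argument of Bruggesser and Mani, so the second assertion of the theorem follows from the first. Second, a shellable cubical complex has a nonnegative cubical $h$-vector; this is seen by summing, over the cubes in a shelling order, the nonnegative ``restriction'' contributions, exactly as in the simplicial case. Hence it suffices to treat an arbitrary $n$-dimensional cubical complex $\Delta$ with nonnegative (short) cubical $h$-numbers $h_0, h_1, \dots, h_n$.

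The first main step is a local-to-global formula of the form
\[
  h(\sd\Delta; t) \ = \ \sum_{j=0}^{n} h_j \, \rho_{n, j}(t),
\]
where the polynomials $\rho_{n,j}(t)$ depend only on $n$ and $j$, not on $\Delta$. I would obtain it by enumerating the faces of $\sd\Delta$ --- that is, the chains in the face poset of $\Delta$ --- grouped by their minimum element, writing the resulting $f$-vector identity, and converting to $h$-polynomials; the cubical $h$-vector is designed precisely so that this passage goes through cleanly. The $\rho_{n,j}$ should then be identified as $h$-polynomials of barycentric subdivisions of explicit subcomplexes of the boundary of an $n$-cube, and so admit a description via signed permutations with prescribed descent sets. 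In the extreme case where $\Delta$ is the boundary complex of a single cube one recovers $h(\sd\Delta; t) = B_n(t)$, the type $B_n$ Eulerian polynomial; this is the source of the interlacing in the conclusion, and it plays the role that the ordinary Eulerian polynomials play in the Brenti--Welker analysis of the simplicial case.

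The second, and decisive, main step is to show that $\bigl(\rho_{n,0}, \rho_{n,1}, \dots, \rho_{n,n}\bigr)$ is an \emph{interlacing sequence} of polynomials with nonnegative coefficients and that each $\rho_{n,j}$ is interlaced by $B_n(t)$. I expect to prove this by induction on $n$: one sets up a recursion expressing the $\rho_{n,j}$ as suitable linear combinations of the $\rho_{n-1,i}$ --- reflecting how barycentric subdivision interacts with the facial structure of the cube --- and then invokes a preservation lemma stating that the linear operations involved send interlacing sequences to interlacing sequences (the type $B$ counterpart of the identities used by Brenti and Welker). The known real-rootedness of $B_n(t)$, together with its compatibility with the $\rho_{n,j}$, supplies the base of the induction.

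Granting these two steps, Theorem~\ref{thm:main} is immediate: since the $h_j$ are nonnegative and not all zero, $h(\sd\Delta; t) = \sum_j h_j \, \rho_{n,j}(t)$ is a nonnegative linear combination of an interlacing sequence, hence has only real roots, and it is interlaced by $B_n(t)$ because each summand is. I expect the interlacing step to be the main obstacle. Establishing real-rootedness of a single polynomial is one thing; controlling the entire family $\{\rho_{n,j}\}$ simultaneously requires either a sufficiently explicit combinatorial model for these polynomials --- a type $B$ analogue of the ``word'' interpretation of the Brenti--Welker polynomials --- or a clean recursion coupled with an interlacing-preservation lemma, and arranging either of these so that it meshes with the somewhat intricate definition of the cubical $h$-vector is where the real difficulty lies.
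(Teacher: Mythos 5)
Your overall architecture is exactly the one the paper uses: write $h(\sd(\lL),x)$ as a linear combination, with coefficients the entries of the cubical $h$-vector, of polynomials depending only on $n$ and the index (Theorem~\ref{thm:h-trans}), identify these as $h$-polynomials of relative complexes cut out of the barycentric subdivision of a cube, prove by a recursion and an interlacing-preservation lemma that they form an interlacing sequence headed by $B_n(x)$ (Proposition~\ref{prop:pBnk}(e) and Corollary~\ref{cor:main}), and then quote Adin for nonnegativity under shellability and Bruggesser--Mani for polytopes. So in spirit you have reproduced the paper's proof.

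There is, however, one concrete slip that would break Step 1 as you state it: you take the decomposition with respect to the \emph{short} cubical $h$-numbers $h_0,\dots,h_n$. No identity of the form $h(\sd(\lL),x)=\sum_{j=0}^{n}h^{sc}_j(\lL)\,\rho_{n,j}(x)$ with $\lL$-independent $\rho_{n,j}$ can hold: by Proposition~\ref{prop:sdc-h}, $h(\sd(\lL),x)$ equals the $\lL$-independent summand $(1-x)^{n+1}$ plus a linear function of the face numbers, so it is an affine, not linear, function of the short cubical $h$-vector. Already for $n=0$ (a set of $m$ points) one has $h(\sd(\lL),x)=1+(m-1)x$ while the single short cubical $h$-number is proportional to $m$, so no fixed $\rho_{0,0}$ works; for $n=1$, paths and cycles force incompatible choices. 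The fix is what the paper does: use Adin's long (normalized) cubical $h$-vector $h_0,\dots,h_{n+1}$, where $h_0(\lL)=1$ absorbs the constant term (so $p^B_{n,0}(x)=B_n(x)$) and $h_{n+1}(\lL)=(-1)^n\widetilde{\chi}(\lL)$ pairs with the degree-$(n+1)$ polynomial $p^B_{n,n+1}(x)=xB_n(x)$. This choice also matters for your nonnegativity input: nonnegativity of the \emph{long} cubical $h$-vector for shellable complexes is Adin's nontrivial theorem \cite{Ad96}, not a routine ``sum the restrictions over a shelling'' argument as in the simplicial case (for Cohen--Macaulay complexes it is in fact open), so it should be cited rather than re-derived casually. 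With these corrections, the remaining ingredients you describe --- the recursion for the $\rho_{n,j}$, the interlacing-preservation lemma, and $B_n(x)$ as the common interlacer --- are precisely Proposition~\ref{prop:pBnk} and Corollary~\ref{cor:main}.
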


The case of cubical polytopes was also studied recently 
by Hlavacex and Solus~\cite{HS20+}. Using the concept 
of shellability and the theory of interlacing 
polynomials, they gave an affirmative answer 
for cubical complexes which admit a special type of
shelling and applied their result to certain 
families of cubical polytopes, such as cuboids, 
capped cubical polytopes and neighborly cubical 
polytopes. 

The proof of the aforementioned result of~\cite{BW08}
applies a theorem of Br\"and\'en~\cite{Bra06} on the
subdivision operator \cite[Section~3.3]{Bra15} to a
formula for the $h$-polynomial of the barycentric 
subdivision of a simplicial complex (see 
Remark~\ref{rem:BW-formula}). The proof of 
Theorem~\ref{thm:main} is motivated by the proof of 
the result of~\cite{BW08}, given and extended to the 
setting of uniform triangulations of simplicial 
complexes in~\cite{Ath20+} (the latter was partially 
motivated by \cite[Example~8.1]{Bra15}). To explain 
further, we let $h(\Delta, x) = \sum_{i=0}^{n+1} 
h_i(\Delta) x^i$ denote the $h$-polynomial and 
$\sd(\Delta)$ denote 
the barycentric subdivision of an $n$-dimensional 
simplicial complex $\Delta$. As already shown 
in~\cite{BW08}, there exist polynomials with 
nonnegative coefficients $p_{n,k}(x)$ for $k \in 
\{0, 1,\dots,n+1\}$, which depend only on $n$ and $k$, 
such that 
\begin{equation} \label{eq:BW}
h(\sd(\Delta), x) \ = \ \sum_{k=0}^{n+1} h_k(\Delta) 
                        p_{n,k}(x)
\end{equation}
for every $n$-dimensional simplicial complex $\Delta$.
For every $n \in \NN$, the polynomials $p_{n,k}(x)$ can 
be shown \cite[Example~8.1]{Bra15} to have only real 
roots and to form an interlacing sequence. This implies 
that their nonnegative linear combination 
$h(\sd(\Delta), x)$ also has only real (negative) 
roots and that it is interlaced by $p_{n,0}(x)$, which
equals the classical $(n+1)$st Eulerian polynomial 
$A_{n+1}(x)$ \cite[Section~1.4]{StaEC1}. The interlacing 
condition implies that the roots of $h(\sd(\Delta), x)$
are not arbitrary, but rather that they lie in certain 
intervals that depend only on the dimension $n$, formed
by zero and the roots of $A_{n+1}(x)$. The polynomial 
$p_{n,k}(x)$ can be interpreted as the $h$-polynomial 
of the relative simplicial complex obtained from the 
barycentric subdivision of the $n$-dimensional simplex
by removing all faces lying on $k$ facets of the 
simplex \cite[Section~5]{Ath20+} 
\cite[Section~4.2]{HS20+}.  

This paper presents a similar picture for cubical 
complexes. We define (see Definition~\ref{def:pBnk}) 
polynomials $p^B_{n,k}(x)$ for $k \in \{0, 1,\dots,n+1\}$ 
as the $h$-polynomials of relative simplicial complexes 
obtained from the barycentric subdivision of the 
$n$-dimensional cube by removing all faces lying on 
certain facets of the cube and prove (see 
Theorem~\ref{thm:h-trans}) that Equation~(\ref{eq:BW}) 
continues to hold when $\Delta$ is replaced by an 
$n$-dimensional cubical complex $\lL$, $p_{n,k}(x)$ is 
replaced by $p^B_{n,k}(x)$ and the $h_k(\Delta)$ are 
replaced by the entries of the (normalized) cubical 
$h$-vector of $\lL$, introduced and studied by 
Adin~\cite{Ad96}. We provide recurrences (see 
Proposition~\ref{prop:pBnk}) for the polynomials 
$p^B_{n,k}(x)$ which guarantee that they form an 
interlacing 
sequence for every $n \in \NN$ and conclude that 
$h(\sd(\lL), x)$ has only real (negative) roots and 
that it is interlaced by the $n$th Eulerian polynomial 
$B_n(x)$ of type $B$ for every $n$-dimensional cubical
complex $\lL$ with nonnegative cubical $h$-vector (see
Corollary~\ref{cor:main}). This implies 
Theorem~\ref{thm:main}, since shellable cubical 
complexes were shown~\cite{Ad96} to have nonnegative 
cubical $h$-vector and boundary complexes of convex 
polytopes are shellable~\cite{BM71}. 

The main results of this paper apply to cubical 
regular cell complexes (equivalently, to cubical 
posets) and will be stated at this level of 
generality. What comes perhaps unexpectedly 
is the fact that the transformation of a cubical 
$h$-polynomial into a simplicial one can be so well 
behaved. Corollary~\ref{cor:main} has nontrivial 
applications to triangulations of simplicial 
complexes as well; see Remark~\ref{rem:simplicial}.

\section{Face enumeration of simplicial and cubical 
complexes} \label{sec:enu}

This section recalls some definitions and background 
on the face enumeration of simplicial and cubical 
complexes and their triangulations, and shellability. 
For more information and any undefined terminology, 
we recommend the books \cite{HiAC, StaCCA}. All 
cell complexes considered here are assumed to be 
finite. Throughout this paper, we set $\NN = 
\{0, 1, 2,\dots\}$ and denote by $|S|$ the cardinality 
of a finite set $S$.

\subsection{Simplicial complexes}
\label{sec:simplicial}

An $n$-dimensional \emph{relative simplicial complex} 
\cite[Section~III.7]{StaCCA} is a pair $(\Delta, 
\Gamma)$, denoted $\Delta / \Gamma$, where $\Delta$ 
is an (abstract) $n$-dimensional simplicial complex 
and $\Gamma$ is a subcomplex of $\Delta$. The 
\emph{$f$-polynomial} of $\Delta / \Gamma$ is defined 
as 
\[ f(\Delta / \Gamma, x) \ = \ \sum_{i=0}^{n+1} f_{i-1} 
(\Delta / \Gamma) x^i, \]
where $f_j(\Delta / \Gamma)$ is the number of 
$j$-dimensional faces of $\Delta$ which do not belong 
to $\Gamma$. The \emph{$h$-polynomial} is defined as
\begin{eqnarray*} \label{def:simpl-h}
h(\Delta/\Gamma, x) & = & (1-x)^{n+1} f(\Delta/\Gamma, 
\frac{x}{1-x}) \ = \ \sum_{i=0}^{n+1} f_{i-1} (\Delta/\Gamma) 
\, x^i (1-x)^{n+1-i} \\ & = & \sum_{F \in \Delta/\Gamma} 
x^{|F|} (1-x)^{n+1-|F|} \ := \ \sum_{k=0}^{n+1} \, h_k 
(\Delta/\Gamma) x^k \nonumber
\end{eqnarray*}

\medskip
\noindent
and the sequence $h(\Delta/\Gamma) := (h_0(\Delta/\Gamma), 
h_1(\Delta/\Gamma),\dots,h_{n+1}(\Delta/\Gamma))$ is called 
the \emph{$h$-vector} of $\Delta/\Gamma$. Note that 
$f(\Delta / \Gamma, x)$ has only real roots if and only 
if so does $h(\Delta/\Gamma, x)$. When $\Gamma$ is empty,
we get the corresponding invariants of $\Delta$ and drop 
$\Gamma$ from the notation. Thus, for example,
$h(\Delta, x) = \sum_{k=0}^{n+1} h_k (\Delta) x^k$ is 
the (usual) $h$-polynomial of $\Delta$.

\smallskip
Suppose now that $\Delta$ triangulates an 
$n$-dimensional ball. Then, the boundary complex 
$\partial \Delta$ is a triangulation of an 
$(n-1)$-dimensional sphere and the \emph{interior 
$h$-polynomial} of $\Delta$ is defined as 
$h^\circ(\Delta, x) = h(\Delta / \partial \Delta, 
x)$. The following statement is a special case of 
\cite[Lemma~6.2]{Sta87}. 
\begin{proposition} \label{prop:hsymmetry} 
{\rm (\cite{Sta87})}
Let $\Delta$ be a triangulation of an 
$n$-dimensional ball. Let $\Gamma$ be a 
subcomplex of $\partial\Delta$ which is homeomorphic 
to an $(n-1)$-dimensional ball and $\bar{\Gamma}$ be 
the subcomplex of $\partial\Delta$ whose facets are 
those of $\partial\Delta$ which do not belong to 
$\Gamma$. Then,
\[ h(\Delta / \bar{\Gamma}, x) \ = \ x^{n+1} 
   h(\Delta / \Gamma, 1/x) . \]
Moreover, $h^\circ(\Delta, x) = x^{n+1} h(\Delta, 
1/x)$.
\end{proposition}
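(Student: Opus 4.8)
The plan is to derive both displayed identities from the relative form of the Dehn--Sommerville equations, by the classical device of comparing Euler characteristics of links. Using the substitution $x \mapsto x/(1-x)$ that converts between $h$- and $f$-polynomials, the first identity is equivalent to $f(\Delta/\bar{\Gamma}, x) = (-1)^{n+1} f(\Delta/\Gamma, -1-x)$, that is, comparing coefficients, to
\[ f_{i-1}(\Delta/\bar{\Gamma}) \ = \ \sum_{j \, \geq \, i} (-1)^{n+1-j} \binom{j}{i} \, f_{j-1}(\Delta/\Gamma) \qquad (0 \leq i \leq n+1), \]
and the ``moreover'' clause is its degenerate instance $\Gamma = \emptyset$, $\bar{\Gamma} = \partial\Delta$. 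I would first record the combinatorial topology of the configuration: since $\Delta$ triangulates an $n$-ball, $\partial\Delta$ triangulates an $(n-1)$-sphere, hence is a pseudomanifold, and together with the hypothesis that $\Gamma$ is a triangulated $(n-1)$-ball this yields $\partial\Delta = \Gamma \cup \bar{\Gamma}$ and $\Sigma := \Gamma \cap \bar{\Gamma} = \partial\Gamma$, and shows that the faces of $\Delta$ outside $\bar{\Gamma}$ are precisely those interior to $\Delta$ together with those interior to $\Gamma$.

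The crux is a local computation at each face $F$ of $\Delta$. By the ball hypothesis, $\link_\Delta F$ is a triangulated $(n-|F|)$-sphere when $F$ is interior to $\Delta$, and a triangulated $(n-|F|)$-ball with boundary $\link_{\partial\Delta} F$ when $F$ lies on $\partial\Delta$; and in the latter case $\link_\Gamma F$ is void, is all of $\partial(\link_\Delta F)$, or is a proper full-dimensional subball of it, according to whether $F$ is interior to $\bar{\Gamma}$, is interior to $\Gamma$, or lies on $\Sigma$. Using only $\tilde{\chi}(S^d) = (-1)^d$, $\tilde{\chi}(B^d) = 0$ and the inclusion--exclusion $\tilde{\chi}(K/L) = \tilde{\chi}(K) - \tilde{\chi}(L)$ for the reduced Euler characteristic, one verifies case by case that
\[ e(F) \ := \ \tilde{\chi}\bigl( \link_\Delta F \, / \, \link_\Gamma F \bigr) \ = \ \begin{cases} (-1)^{n-|F|}, & F \notin \bar{\Gamma}, \\ 0, & F \in \bar{\Gamma} . \end{cases} \]

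To put the pieces together, I would expand $\binom{|G|}{i}$ as the number of $i$-element subsets of $G$ and interchange the order of summation, which rewrites the right-hand side of the $f$-vector identity above as $(-1)^{n+1-i} \sum_{|F| = i} \sum_{G} (-1)^{|G|-|F|}$, the inner sum ranging over the faces $G \in \Delta$ with $G \supseteq F$ and $G \notin \Gamma$; since the faces of the relative complex $\link_\Delta F / \link_\Gamma F$ are exactly the sets $G \setminus F$ for such $G$, this inner sum equals $-\tilde{\chi}(\link_\Delta F / \link_\Gamma F) = -e(F)$. By the formula for $e(F)$, the whole expression collapses to $(-1)^{n+1-i} \cdot (-1)^{n+1-i}$ times the number of $i$-element faces of $\Delta$ lying outside $\bar{\Gamma}$, i.e.\ to $f_{i-1}(\Delta/\bar{\Gamma})$, as required; undoing the substitution $x \mapsto x/(1-x)$ recovers the proposition, and the ``moreover'' clause follows by the identical argument with $\Gamma$ empty (so that $e(F)$ reduces to $\tilde{\chi}(\link_\Delta F)$, which is $(-1)^{n-|F|}$ for $F$ interior to $\Delta$ and $0$ otherwise).

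The one place demanding genuine care, rather than bookkeeping, is the local geometry described in the second paragraph: that $\link_\Gamma F$ and $\link_{\bar{\Gamma}} F$ really do subdivide $\partial(\link_\Delta F) = \link_{\partial\Delta} F$ along $\link_\Sigma F$. This is where the assumptions are used --- that $\Gamma$ is an honest triangulated ball (so its links behave correctly along $\partial\Gamma$), and that $\partial\Delta$, being a triangulated manifold, is a pseudomanifold in which every $(n-2)$-face lies on exactly two facets, which forces $\partial\Gamma = \Gamma \cap \bar{\Gamma}$ and thus pins down which faces of $\Delta$ lie in $\bar{\Gamma}$. In any event the statement is contained in \cite[Lemma~6.2]{Sta87} and may be cited directly; the proof there packages the same duality homologically, via local cohomology of Stanley--Reisner modules, rather than through this elementary route.
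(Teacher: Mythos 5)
The paper offers no proof of Proposition~\ref{prop:hsymmetry}: it is simply quoted as a special case of \cite[Lemma~6.2]{Sta87}, where the duality is established algebraically (via local cohomology of Stanley--Reisner modules). Your argument is therefore necessarily a different route, and it is the natural elementary one: translate the claimed reciprocity into $f(\Delta/\bar{\Gamma},x)=(-1)^{n+1}f(\Delta/\Gamma,-1-x)$, expand the binomial coefficients, interchange summations, and reduce everything to the local statement that $\widetilde{\chi}\bigl(\link_\Delta F/\link_\Gamma F\bigr)$ equals $(-1)^{n-|F|}$ when $F\notin\bar{\Gamma}$ and $0$ when $F\in\bar{\Gamma}$. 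I checked the bookkeeping (the sign juggling, the identification of the inner sum with $-\widetilde{\chi}$ of the relative link, and the collapse to $f_{i-1}(\Delta/\bar{\Gamma})$) and it is correct; the degenerate case $\Gamma=\varnothing$ does give the ``moreover'' clause. What this buys is a self-contained combinatorial-topological proof where the paper only cites Stanley; what Stanley's route buys is greater generality (Eulerian/Buchsbaum-type hypotheses rather than actual balls) without any case analysis of links.

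Two spots in your local computation need tightening, though neither is fatal. First, for a general (not necessarily PL) triangulation of a ball, the link of an interior face need not be a triangulated sphere, nor the link of a boundary face a triangulated ball; what is true, and is all your Euler-characteristic computation uses, is that these links are homology spheres, respectively acyclic, because a compact manifold with boundary is a homology manifold with boundary and $H_i(|\Delta|,|\Delta|\sm p)\cong \widetilde{H}_{i-|F|}(\link_\Delta F)$ for $p$ in the relative interior of $F$. Second, the equality $\Gamma\cap\bar{\Gamma}=\partial\Gamma$ is not forced by the pseudomanifold property of $\partial\Delta$ alone: that property yields only $\partial\Gamma\subseteq\Gamma\cap\bar{\Gamma}$ (the second facet of $\partial\Delta$ at a boundary ridge of $\Gamma$ lies outside $\Gamma$). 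The reverse inclusion --- that no interior face $F$ of $\Gamma$ lies in $\bar{\Gamma}$ --- is exactly what makes $\widetilde{\chi}(\link_\Gamma F)$ vanish for $F\in\Gamma\cap\bar{\Gamma}$, and it needs a separate argument: either invariance of domain (an interior point of the $(n-1)$-ball $|\Gamma|$ has a neighborhood in $|\partial\Delta|$ contained in $|\Gamma|$, so $F$ cannot lie in a facet outside $\Gamma$), or the observation that otherwise $\link_\Gamma F$ would be a proper full-dimensional subcomplex of the homology sphere $\link_{\partial\Delta}F$ carrying nonzero top homology, which is impossible. With these two repairs your proof is complete and correct.
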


\subsection{Cubical complexes}
\label{sec:cubical}

%%A \emph{polyhedral complex} is a (finite) 
%%collection $\lL$ of polytopes in some Euclidean space
%%which is closed under taking faces and has the 
%%\emph{intersection property}, meaning that the 
%%intersection of any two elements of $\lL$ is a common
%%face (possibly empty) of both. The \emph{dimension} 
%%of $\lL$ is defined as the maximum dimension of its 
%%elements (called \emph{faces}). 

A \emph{regular cell complex} \cite[Section 4.7]{OM}
is a (finite) collection $\lL$ of subspaces of a 
Hausdorff space $X$, called \emph{cells} or 
\emph{faces}, each homeomorphic to a closed unit 
ball in some finite-dimensional Euclidean space, 
such that: (a) $\varnothing \in \lL$; (b) the relative 
interiors of the nonempty cells partition $X$; 
and (c) the boundary of any cell in $\lL$ is a 
union of cells in $\lL$. The \emph{boundary complex} 
of $\sigma \in \lL$, denoted by $\partial \sigma$,
is defined as the regular cell complex consisting 
of all faces of $\lL$ properly contained in $\sigma$.
A regular cell complex $\lL$ is called \emph{cubical} 
if every nonempty face of $\lL$ is combinatorially 
isomorphic to a cube. A convex polytope is called 
\emph{cubical} if so is its boundary complex. 

Given a cubical complex $\lL$ of 
dimension $n$, we denote by $f_k(\lL)$ the number of 
$k$-dimensional faces of $\lL$. The cubical
$h$-polynomial was introduced and studied by 
Adin~\cite{Ad96} as a (well behaved) analogue of the 
(simplicial) $h$-polynomial of a simplicial complex. 
Following \cite[Section~4]{EH00}, we define the 
(normalized) \emph{cubical $h$-polynomial} of $\lL$ as 
\begin{equation} \label{def:cub-h}
(1+x) h(\lL, x) \ = \ 1 \, + \, \sum_{k=0}^n \, 
f_k(\lL) \, x^{k+1} \left( \frac{1-x}{2} \right)^{n-k} 
+ \ (-1)^n \, \widetilde{\chi} (\lL) x^{n+2} , 
\end{equation}
where $\widetilde{\chi} (\lL) = -1 + \sum_{k=0}^n 
(-1)^k f_k(\lL)$ is the reduced Euler characteristic 
of $\lL$ (the only difference from Adin's definition 
is that all coefficients of $h(\lL, x)$ have been 
divided by $2^n$ and, therefore, are not necessarily 
integers). We note that $h(\lL, x)$ is indeed a 
polynomial in $x$ of degree at most $n+1$. The 
(normalized) \emph{cubical $h$-vector} of $\lL$ is the 
sequence $h(\lL) = (h_0(\lL), h_1(\lL),\dots,h_{n+1}
(\lL))$, where $h(\lL, x) = \sum_{k=0}^{n+1} h_k(\lL) 
x^k$. 

\smallskip
Adin showed that $h(\lL, x)$ has nonnegative 
coefficients for every shellable cubical complex $\lL$ 
\cite[Theorem~5~(iii)]{Ad96} (his result is stated for
abstract cubical complexes with the intersection 
property, but the proof is valid without assuming the 
later). He asked whether the same holds whenever $\lL$ 
is Cohen--Macaulay 
\cite[Question~1]{Ad96}. The coefficient $h_k(\lL)$ is
known to be nonnegative for every Cohen--Macaulay $\lL$
for $k \in \{0, 1\}$, since $h_0(\lL) = 1$ and 
$h_1(\lL) = (f_0(\lL) - 2^n)/2^n$, for $k=n$ 
\cite[Corollary~1.2]{Ath12} and for $k = n+1$, since 
$h_{n+1}(\lL) = (-1)^n \widetilde{\chi} (\lL)$,
and for every $k$ in the special case that $\lL$ is the 
cubical barycentric subdivision of a Cohen--Macaulay 
simplicial complex \cite{Het96} (see also 
Remark~\ref{rem:simplicial}).
			
\subsection{Barycentric subdivision and shellability}
\label{sec:shell}

The \emph{barycentric subdivision} of a regular cell
complex $\lL$ is denoted by $\sd(\lL)$ 
and defined as the abstract simplicial complex whose 
faces are the chains $\sigma_0 \subset \sigma_1 
\subset \cdots \subset \sigma_k$ of nonempty faces 
of $\lL$. The natural restriction of $\sd(\lL)$ to a 
nonempty face $\sigma \in \lL$ is exactly the 
barycentric subdivision $\sd(\sigma)$ of (the 
complex of faces of) $\sigma$. 

Similarly, by the barycentric subdivision $\sd(Q)$ 
of a convex polytope $Q$ we mean that of the complex 
of faces 
of $Q$. Since $\sd(Q)$ is a cone over $\sd(\partial 
Q)$, we have $h(\sd(Q), x) = h(\sd(\partial Q), x)$.
For the $n$-dimensional cube $Q$ we have $h(\sd(Q), 
x) = B_n(x)$, where $B_n(x)$ is the Eulerian polynomial
which counts signed permutations of $\{1, 2,\dots,n\}$
by the number of descents of type $B$; see, for 
instance, \cite[Chapter~11]{Pet15}. The following 
well known type $B$ analogue of Worpitzky's identity 
\cite[Equation~(13.3)]{Pet15}
\begin{equation}
\label{eq:Bn-gen}
\frac{B_n(x)}{(1-x)^{n+1}} \ = \ \sum_{m \ge 0} \, 
(2m+1)^n x^m 
\end{equation} 
will make computations in the following section 
easier.

A regular cell complex $\lL$ is called \emph{pure} if 
all its facets (faces which are maximal with respect 
to inclusion) have the same dimension. Such a 
complex $\lL$ is called \emph{shellable} if either 
it is zero-dimensional, or else there exists a 
linear ordering $\tau_1, \tau_2,\dots,\tau_m$ of its 
facets, called a \emph{shelling}, such that (a) 
$\partial \tau_1$ is shellable; and (b) for 
$2 \le j \le m$, the complex of faces of $\partial 
\tau_j$ which are contained in $\tau_1 \cup 
\tau_2 \cup \cdots \cup \tau_{j-1}$ is pure, of the 
same dimension as $\partial \tau_j$, and there exists 
a shelling of $\partial \tau_j$ for which the facets 
of $\partial \tau_j$ contained in $\tau_1 \cup \tau_2 
\cup \cdots \cup \tau_{j-1}$ form an initial segment. 
A fundamental result of Bruggesser and Mani 
\cite{BM71} states that $\partial Q$ is shellable
for every convex polytope $Q$. For the shellability 
of cubical complexes in particular, see 
\cite[Section~3]{EH00} \cite[Section~3]{HS20+}.

\section{The $h$-vector transformation}
\label{sec:h-trans}

This section studies the transformation which maps 
the cubical $h$-vector of a cubical complex $\lL$ to 
the (simplicial) $h$-vector of the barycentric 
subdivision $\sd(\lL)$ and deduces 
Theorem~\ref{thm:main} from its properties. We begin 
with an important definition.
\begin{definition} \label{def:pBnk}
For $n \in \NN$ and $k \in \{0, 1,\dots,n+1\}$ we
denote by $\cC_{n,k}$ the relative simplicial complex 
which is obtained from the barycentric subdivision of 
the $n$-dimensional cube by removing 
\begin{itemize}
\item[$\bullet$] 
no face, if $k=0$, 
\item[$\bullet$] 
all faces which lie in one facet and $k-1$ pairs 
of antipodal facets of the cube (making a total of 
$2k-1$ facets), if $k \in \{1, 2,\dots,n\}$,
\item[$\bullet$] 
all faces on the boundary of the cube, if $k=n+1$.
\end{itemize}
We define $p^B_{n,k}(x) = h(\cC_{n,k}, x)$ for $k 
\in \{0, n+1\}$, and $p^B_{n,k}(x) = 
2 h(\cC_{n,k}, x)$ for $k \in \{1, 2,\dots,n\}$.
\end{definition} 

The polynomials $p^B_{n,k}(x)$ are shown on 
Table~\ref{tab:pBnk} for $n \le 3$. For $n=4$,

\[ p^B_{4,k}(x) \ = \ \begin{cases}
   1 + 76x + 230x^2 + 76x^3 + x^4, 
	 & \text{if $k = 0$,} \\
	 108x + 460x^2 + 196x^3 + 4x^4, 
	 & \text{if $k = 1$,} \\
   36x + 420x^2 + 300x^3 + 12x^4, 
	 & \text{if $k=2$,} \\
   12x + 300x^2 + 420x^3 + 36x^4, 
	 & \text{if $k=3$,} \\
   4x + 196x^2 + 460x^3 + 108x^4, 
	 & \text{if $k=4$,} \\
   x + 76x^2 + 230x^3 + 76x^4 + x^5, 
	 & \text{if $k=5$}. \end{cases}  \]

\medskip
\noindent
Their significance stems from the following theorem.
\begin{theorem} \label{thm:h-trans}
For every $n$-dimensional cubical complex $\lL$, 
\[ h(\sd(\lL), x) \ = \ \sum_{k=0}^{n+1} h_k(\lL) 
   p^B_{n,k}(x) . \]
\end{theorem}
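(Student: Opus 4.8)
The plan is to expand both sides of the claimed identity using the definition of the $h$-polynomial of a relative simplicial complex as a sum over faces, reduce everything to counting chains of faces of $\lL$ weighted by $x$-powers, and then match coefficients via the defining equation~(\ref{def:cub-h}) of the cubical $h$-vector together with the type-$B$ Worpitzky identity~(\ref{eq:Bn-gen}). First I would write $h(\sd(\lL), x) = \sum_{F \in \sd(\lL)} x^{|F|} (1-x)^{n+1-|F|}$, group the chains $\sigma_0 \subset \cdots \subset \sigma_{j-1}$ of $\lL$ according to the dimension $d$ of their top face $\sigma_{j-1}$, and observe that for a fixed face $\sigma$ of dimension $d$ the chains of $\sd(\lL)$ whose maximum is $\sigma$ are exactly the faces of $\sd(\partial\sigma) \ast \{\sigma\}$, i.e.\ the cone over $\sd(\partial\sigma)$; since $\sigma$ is combinatorially a $d$-cube, this contributes a factor related to $B_d(x)$ via the cone formula $h(\sd(\sigma), x) = h(\sd(\partial\sigma), x) = B_d(x)$. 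Thus $h(\sd(\lL), x)$ becomes a linear combination of the $B_d(x)$ with coefficients essentially the $f_d(\lL)$, after the necessary $(1-x)$ bookkeeping. This is the `local-to-global' step for the barycentric subdivision.

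The next step is to do the same local analysis for the right-hand side: I would show that $\sum_{k=0}^{n+1} h_k(\lL) p^B_{n,k}(x)$ also decomposes as a combination of the $f_d(\lL)$ (or more naturally of the $f$-vector of $\lL$) with coefficients depending only on $n$ and $d$, by interchanging summation and using the definition of $p^B_{n,k}(x) = h(\cC_{n,k},x)$ (resp.\ $2h(\cC_{n,k},x)$) as a signed sum over faces of $\sd$ of the $n$-cube not lying on the prescribed facets. A cleaner route is to note that it suffices to verify the identity for the universal example where $\lL$ ranges over a spanning set, and the right structure is furnished by the cube itself and its faces: using the inclusion--exclusion description of $\cC_{n,k}$ in terms of faces of the $n$-cube lying on unions of (antipodal pairs of) facets, one reduces to an identity of polynomials purely in $n$ and $k$. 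Concretely, I expect that applying~(\ref{def:cub-h}) to express the $h_k(\lL)$ through the $f_k(\lL)$ and $\widetilde\chi(\lL)$, both sides reduce to the assertion that, for each $d$, the coefficient of $f_d(\lL)$ on the right equals $x^{d+1}(1-x)^{\,\text{something}} \cdot (\text{the } B\text{-polynomial factor from the left})$ after the substitution $x \mapsto x/(1-x)$, which is exactly what~(\ref{eq:Bn-gen}) encodes.

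The main obstacle will be the correct bookkeeping of the two `correction' terms in~(\ref{def:cub-h}): the leading $1$ and the reduced Euler characteristic term $(-1)^n \widetilde\chi(\lL) x^{n+2}$, together with the $1+x$ normalizing factor on the left-hand side of~(\ref{def:cub-h}). These correspond on the $\sd$ side to the empty chain and to the top-dimensional faces, and matching them forces the special definitions of $\cC_{n,0}$ (remove nothing, giving $p^B_{n,0}(x) = h(\sd(\text{cube}),x) = B_n(x)$, consistent with the all-facets term) and $\cC_{n,n+1}$ (remove the whole boundary, giving the interior $h$-polynomial of $\sd$ of the cube, which by Proposition~\ref{prop:hsymmetry} equals $x^{n+1}B_n(1/x) = B_n(x)$). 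I would therefore first treat $k=0$ and $k=n+1$ by hand, checking that $p^B_{n,0}(x) = p^B_{n,n+1}(x) = B_n(x)$, and then handle the generic terms $k \in \{1,\dots,n\}$, where the factor $2$ in the definition of $p^B_{n,k}(x)$ accounts for the fact that a non-antipodal facet and its antipode together contribute a single pair. Once the local contributions are identified, the identity is a termwise comparison and the remaining work is the routine algebra of substituting $x/(1-x)$ and invoking~(\ref{eq:Bn-gen}); the genuinely delicate point is getting the facet-removal combinatorics in $\cC_{n,k}$ to line up with the exponent $n-k$ in~(\ref{def:cub-h}).
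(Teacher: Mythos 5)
Your overall architecture is essentially the paper's: grouping the chains of $\sd(\lL)$ by their top face is exactly the decomposition behind Proposition~\ref{prop:sdc-h}, and reducing the resulting identity in the $f_k(\lL)$ to the defining equation \eqref{def:cub-h} of the cubical $h$-vector via the type $B$ Worpitzky identity \eqref{eq:Bn-gen} is precisely how the paper concludes. But there are two concrete problems with your sketch. First, your treatment of $k=n+1$ is wrong: removing the whole boundary of the cube gives the interior $h$-polynomial, and by Proposition~\ref{prop:hsymmetry} this equals $x^{n+1}B_n(1/x)=xB_n(x)$ (since $B_n$ is palindromic of degree $n$), \emph{not} $B_n(x)$; compare Table~\ref{tab:pBnk}, where $p^B_{1,2}(x)=x+x^2=xB_1(x)$. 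This extra factor of $x$ is exactly the ``delicate bookkeeping'' of the term $(-1)^n\widetilde{\chi}(\lL)x^{n+2}$ in \eqref{def:cub-h} that you yourself flag as the main obstacle, so with $p^B_{n,n+1}(x)=B_n(x)$ your coefficient comparison would not close. Relatedly, the local contribution of a $d$-dimensional face $\sigma$ to the left-hand side is the \emph{interior} $h$-polynomial $h^\circ(\sd(\sigma),x)=xB_d(x)$, not $B_d(x)$: the chains with maximum $\sigma$ are not all faces of the cone $\sd(\partial\sigma)\ast\{\sigma\}$, only those containing the apex, i.e.\ the interior faces of $\sd(\sigma)$.

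Second, for the generic terms $1\le k\le n$ you only ``expect'' that the coefficient of $f_d(\lL)$ matches, but that is where the content of the theorem lies, and it requires an explicit handle on $p^B_{n,k}(x)$. In the paper this is part (f) of Proposition~\ref{prop:pBnk}, namely $p^B_{n,k}(x)/(1-x)^{n+1}=\sum_{m\ge 0}(4m)(2m-1)^{k-1}(2m+1)^{n-k}x^m$, obtained from the face-count recurrence $f(\cC_{n+1,k+1},x)=f(\cC_{n+1,k},x)-2f(\cC_{n,k},x)$. Your proposed inclusion--exclusion over the $2k-1$ removed facets could in principle yield the same formula (intersections of the chosen facets are lower-dimensional cubes, antipodal pairs intersect trivially), but you do not carry it out, and without some such explicit evaluation the final ``termwise comparison'' is not available. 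So: right strategy, but you must correct $p^B_{n,n+1}(x)$ to $xB_n(x)$ (and the local contribution to $xB_d(x)$), and actually supply the evaluation of $p^B_{n,k}(x)$ for $1\le k\le n$ before the reduction to \eqref{def:cub-h} goes through.
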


{\scriptsize
\begin{table}[hptb]
\begin{center}
\begin{tabular}{| l || l | l | l | l | l | l ||} \hline
& $k=0$ & $k=1$ & $k=2$    & $k=3$ & $k=4$ \\ \hline \hline
$n=0$   & 1    & $x$    & & & \\ 
         \hline
$n=1$   & $1+x$ & $4x$ & $x+x^2$ & & \\ \hline
 $n=2$  & $1+6x+x^2$ & $12x+4x^2$ & 
          $4x+12x^2$ & $x+6x^2+x^3$ & \\ \hline
 $n=3$  & $1+23x+23x^2+x^3$ & $36x+56x^2+4x^3$ 
        & $12x+72x^2+12x^3$ & $4x+56x^2+36x^3$ & 
				$x+23x^2+23x^3+x^4$ \\ \hline
\end{tabular}
\caption{The polynomials $p^B_{n,k}(x)$ for $n \le 3$.}
\label{tab:pBnk}
\end{center}
\end{table}}

The proof requires a few preliminary results. We
first summarize  some of the main properties of 
$p^B_{n,k}(x)$.
\begin{proposition} \label{prop:pBnk}
For every $n \in \NN$:
\begin{itemize}
\itemsep=0pt
\item[{\rm (a)}]
The polynomial $p^B_{n,k}(x)$ has nonnegative 
coefficients for every $k \in \{0, 1,\dots,n+1\}$;
its degree is equal to $n+1$, if $k = n+1$, and to
$n$ otherwise. 

\item[{\rm (b)}]
$p^B_{n,n+1-k}(x) = x^{n+1}p^B_{n,k}(1/x)$ for
every $k \in \{0, 1,\dots,n+1\}$.

\item[{\rm (c)}]
$p^B_{n,0}(x) = B_n(x)$, $p^B_{n,n+1}(x) = xB_n(x)$ 
and $\sum_{k=0}^{n+1} p^B_{n,k}(x) = B_{n+1}(x)$.

\item[{\rm (d)}]
We have
\[ p^B_{n+1,k+1}(x) \ = \ \begin{cases}
   2 p^B_{n+1,0}(x) + 2 (x-1) p^B_{n,0}(x), 
	 & \text{if $k = 0$,} \\
	 p^B_{n+1,k}(x) + 2(x-1) p^B_{n,k}(x), 
	 & \text{if $1 \le k \le n$,} \\
   (1/2) \cdot p^B_{n+1,n+1}(x) + (x-1) p^B_{n,n+1}(x), 
	 & \text{if $k=n+1$} . \end{cases}  \]

\item[{\rm (e)}]
The recurrence 
\[ p^B_{n+1,k}(x) \ = \ \begin{cases} 
   {\displaystyle \sum_{i=0}^{n+1} p^B_{n,i}(x)}, 
	 & \text{if $k = 0$}, \\
	 {\displaystyle 2x \sum_{i=0}^{k-1} p^B_{n,i}(x) 
	 \, + \, 2 \sum_{i=k}^{n+1} p^B_{n,i}(x)}, 
	 & \text{if $1 \le k \le n+1$}, \\
   {\displaystyle x \sum_{i=0}^{n+1} p^B_{n,i}(x)}, 
	 & \text{if $k=n+2$} \end{cases} \]
holds for $k \in \{0, 1,\dots,n+1\}$. 

\item[{\rm (f)}]
We have 
\[ \frac{p^B_{n,k}(x)}{(1-x)^{n+1}} \ = \ \begin{cases} 
   {\displaystyle \sum_{m \ge 0} \, (2m+1)^n x^m}, 
	 & \text{if $k = 0$,} \\
	 {\displaystyle \sum_{m \ge 0} \, (4m) (2m-1)^{k-1} 
	                (2m+1)^{n-k} x^m}, 
	 & \text{if $1 \le k \le n$,} \\
   {\displaystyle \sum_{m \ge 1} \, (2m-1)^n x^m},
	 & \text{if $k=n+1$}. \end{cases} \]
\end{itemize}
\end{proposition}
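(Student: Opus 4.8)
The heart of the matter is part~(f): once it is proved, (a)--(e) follow by elementary manipulations. To prove~(f), write $C$ for the $n$-dimensional cube, identify the nonempty faces of $C = [-1,1]^n$ with sign vectors $v \in \{-,0,+\}^n$ (the face with vector $v$ being $\{x : x_i = v_i$ for $v_i \ne 0\}$, which is a face of the one with vector $w$ exactly when $v_i = w_i$ whenever $w_i \ne 0$), and let $U_k$ be the set of nonempty faces of $C$ lying in none of the chosen $2k-1$ facets; since a face containing a face of $U_k$ again lies in $U_k$, the set $U_k$ is a filter in the face poset. From $h(\Delta/\Gamma,x) = \sum_t f_{t-1}(\Delta/\Gamma)\,x^t(1-x)^{n+1-t}$ and the identity $(x/(1-x))^t = \sum_{m\ge t}\binom{m-1}{t-1}x^m$ one gets
\[ \frac{h(\cC_{n,k},x)}{(1-x)^{n+1}} \ = \ \sum_{m\ge 0} M_k(m)\,x^m , \]
where $M_k(0) = f_{-1}(\cC_{n,k}) \in \{0,1\}$ and, for $m\ge 1$, $M_k(m)$ counts the multichains $\rho_1 \le \cdots \le \rho_m$ of nonempty faces of $C$ with $\rho_m \in U_k$ (each such multichain being determined by its set of distinct values, a chain ending in $U_k$, together with the multiplicities). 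Now $\rho_j \le \rho_{j+1}$ forces, in every coordinate $i$, the sequence of $i$th entries of $\rho_1,\dots,\rho_m$ to be a run of a single sign $\epsilon_i$ followed by a run of zeros; hence a length-$m$ multichain amounts to an independent choice, in each coordinate $i$, of a pair $(a_i,\epsilon_i)$ with $a_i\in\{0,\dots,m\}$ (and $\epsilon_i$ present only if $a_i\ge 1$), for $(2m+1)^n$ multichains in total. Imposing $\rho_m\in U_k$ for the chosen representative ($\{x_1=+1\}$ together with $\{x_l=\pm 1\}$ for $2\le l\le k$) forbids exactly the choice $(a_1,\epsilon_1)=(m,+)$ in coordinate $1$ (leaving $2m$), forces $a_l<m$ in coordinates $2,\dots,k$ (leaving $2m-1$ each) and leaves coordinates $k+1,\dots,n$ unconstrained; when $k=n+1$ one has $U_{n+1}=\{C\}$ and $a_i<m$ is forced in every coordinate. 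Multiplying the per-coordinate counts, and recalling $p^B_{n,k}=2h(\cC_{n,k},x)$ for $1\le k\le n$, yields the three formulas of~(f); the case $k=0$ is~(\ref{eq:Bn-gen}), and the case $k=n+1$ gives $\sum_{m\ge 1}(2m-1)^n x^m = x\sum_{m\ge 0}(2m+1)^n x^m$, i.e.\ $p^B_{n,n+1}(x)=xB_n(x)$.

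Parts~(c) and the degree statement in~(a) are then immediate. We have just obtained $p^B_{n,0}(x)=B_n(x)$ and $p^B_{n,n+1}(x)=xB_n(x)$, and summing the generating functions of~(f) over $k$ (the middle $k$ contribute a geometric sum in the ratio $(2m-1)/(2m+1)$) collapses them, coefficient by coefficient, to the power series $B_{n+1}(x)/(1-x)^{n+1}$, whence $\sum_{k}p^B_{n,k}(x)=B_{n+1}(x)$. For the degrees: when $k\le n$ the coefficient sequence $g_k(m)$ (equal to $M_k(m)$ if $k=0$ and to $2M_k(m)$ if $1\le k\le n$) is a polynomial in $m$ of degree $n$ agreeing with $f_{-1}(\cC_{n,k})$ at $m=0$, so $p^B_{n,k}(x)=(1-x)^{n+1}\sum_{m\ge0}g_k(m)x^m$ has degree $\le n$ with leading coefficient $(-1)^n g_k(-1)$ (the $(n+1)$st finite difference of a degree-$n$ polynomial vanishing), and $(-1)^n g_k(-1)$ works out to $1$ for $k=0$ and to $4\cdot 3^{k-1}$ for $1\le k\le n$, in particular positive; and $\deg(xB_n)=n+1$ takes care of $k=n+1$.

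Part~(e) is verified by dividing the claimed identity by $(1-x)^{n+2}$ and comparing power series via~(f): the operators $x/(1-x)$ and $1/(1-x)$ turn the partial sums $\sum_{i<k}$ and $\sum_{i\ge k}$ into running sums of the level-$n$ coefficient sequences, and a routine telescoping-plus-geometric-sum computation matches both sides (a direct proof from a shelling of $\sd(C)$ is also possible, as in the simplicial case, but this is shortest given~(f)). Part~(d) follows from~(e) and~(c): subtracting consecutive instances of~(e) gives $p^B_{n+1,k+1}(x)-p^B_{n+1,k}(x)=2(x-1)p^B_{n,k}(x)$ for $1\le k\le n$, and the cases $k\in\{0,n+1\}$ of~(d) drop out after substituting $\sum_{i=0}^{n+1}p^B_{n,i}(x)=B_{n+1}(x)=p^B_{n+1,0}(x)$. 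Finally, the nonnegativity half of~(a) follows by induction on $n$: by~(e) each $p^B_{n+1,k}(x)$ is a combination of $p^B_{n,0}(x),\dots,p^B_{n,n+1}(x)$ with multipliers among $1,2,2x,x$, all with nonnegative coefficients, and the base case $n=0$ ($p^B_{0,0}(x)=1$, $p^B_{0,1}(x)=x$) is clear.

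For the symmetry~(b), the cleanest route is Proposition~\ref{prop:hsymmetry}: the subcomplex of $\partial\sd(C)=\sd(\partial C)$ deleted in forming $\cC_{n,k}$ is $\sd(\Gamma_k)$, where $\Gamma_k$ is the union of the chosen $2k-1$ facets of $C$; the point of choosing ``one facet plus $k-1$ antipodal pairs'' is precisely that $\Gamma_k$ is then a shellable $(n-1)$-ball (a shellable pseudomanifold with nonempty boundary, hence a ball by Danaraj--Klee), whose complementary union of facets is a $\Gamma_{n+1-k}$-configuration, so that $\cC_{n,n+1-k}$ is $\sd(C)$ modulo $\overline{\sd(\Gamma_k)}$ in the notation of Proposition~\ref{prop:hsymmetry}; that proposition gives $h(\cC_{n,n+1-k},x)=x^{n+1}h(\cC_{n,k},1/x)$, and multiplying by the equal normalizing constants gives~(b) for $1\le k\le n$, the cases $k\in\{0,n+1\}$ being the palindromicity of $B_n(x)$ (or the ``Moreover'' clause of Proposition~\ref{prop:hsymmetry}). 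Alternatively,~(b) can be read off directly from~(f) via the reciprocity $\sum_{m\ge1}g_k(-m)x^m$ being, up to sign and the substitution $x\mapsto 1/x$, the generating function attached to $p^B_{n,n+1-k}$. The one genuine obstacle in the whole argument is the bookkeeping in the first paragraph --- fixing the relative-complex and ``top face'' conventions, the coordinate-wise product, and the $m=0$ and $k\in\{0,n+1\}$ boundary cases; after~(f) everything is routine algebra or a short induction.
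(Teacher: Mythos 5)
Your proposal is correct, but it runs the argument in the opposite direction from the paper, with a different computation at its core. The paper's engine is part~(d): from Definition~\ref{def:pBnk} one gets the $f$-polynomial recurrence $f(\cC_{n+1,k+1},x)=f(\cC_{n+1,k},x)-2f(\cC_{n,k},x)$, hence (d); then (f) follows from (d) by induction on $k$ with base case~(\ref{eq:Bn-gen}), (c) uses Proposition~\ref{prop:hsymmetry} plus summing (f) (or a shelling), and (e) is deduced from (c) and (d) by induction on $k$. You instead prove (f) first and directly, by the multichain count in the face poset of $[-1,1]^n$: identifying faces with sign vectors, counting length-$m$ multichains coordinatewise and imposing that the top element avoid the $2k-1$ chosen facets does give $2m\cdot(2m-1)^{k-1}(2m+1)^{n-k}$ (and $(2m-1)^n$ for $k=n+1$), and your bookkeeping of the $m=0$ and $k\in\{0,n+1\}$ cases and of the normalization factor $2$ is right; you then read (c) off (f), verify (e) from (f) by running sums, recover (d) by differencing (e), and get (a) from (e) plus the leading-coefficient computation $(-1)^n g_k(-1)=4\cdot 3^{k-1}$, which checks against Table~\ref{tab:pBnk}. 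For (b) you follow essentially the paper's two routes (Proposition~\ref{prop:hsymmetry}, or reciprocity applied to (f)), adding the shellable-pseudomanifold justification that the union of one facet and $k-1$ antipodal pairs is an $(n-1)$-ball, a point the paper leaves implicit. What your route buys is a self-contained combinatorial interpretation of the coefficients in (f) (a type $B$ Worpitzky-style count for the relative complexes) and explicit leading coefficients; what the paper's route buys is brevity, since (d) falls out of the definition at once, whereas you must recover (d) from (e) and must carry out the (asserted, not written) verification of (e) from (f) --- that verification does work: after dividing by $(1-x)^{n+2}$ and taking successive differences in $m$ it reduces to a two-line identity between the partial sums $\sum_{i<k}$ and $\sum_{i\ge k}$ of the level-$n$ coefficient sequences, but you should include it (or the short induction the paper uses) to make the sketch complete.
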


\begin{proof} 
We first note that, as discussed in 
Section~\ref{sec:shell}, $p^B_{n,0}(x) = h(\cC_{n,0},x) = 
B_n(x)$. Part (d) follows from Definition~\ref{def:pBnk} 
and the definition of the $h$-polynomial of a relative 
simplicial complex. Indeed, for $1 \le k \le n$, we have
$f(\cC_{n+1,k+1}, x) = f(\cC_{n+1,k}, x) - 2 f(\cC_{n,k}, 
x)$. Hence, 

\begin{eqnarray*}
h(\cC_{n+1,k+1}, x) & = & (1-x)^{n+2} f(\cC_{n+1,k+1}, 
\frac{x}{1-x}) \\ & & \\ & = &
(1-x)^{n+2} f(\cC_{n+1,k}, \frac{x}{1-x}) \, - \, 
2 (1-x) \cdot (1-x)^{n+1} f(\cC_{n,k}, \frac{x}{1-x}) 
\\ & & \\ & = &
h(\cC_{n+1,k}, x) \, + \, 2(x-1) h(\cC_{n,k}, x)
\end{eqnarray*}

\medskip
\noindent
and

\begin{eqnarray*}
p^B_{n+1,k+1}(x) & = & 2 h(\cC_{n+1,k+1}, x) 
\ = \ 2 h(\cC_{n+1,k}, x) \, + \, 4 (x-1) 
h(\cC_{n,k}, x) \\ & = & p^B_{n+1,k}(x) \, + 
\, 2 (x-1) p^B_{n,k}(x).
\end{eqnarray*}

\medskip
\noindent
The same argument, similar to that in the proof of 
\cite[Corollary~5.6]{Ath20+}, works for $k \in \{0, 
n+1\}$. Part (f) follows from part (d) by 
straightforward induction on $k$ (for fixed $n$), 
where the base $k=0$ of the induction holds because 
of Equation~(\ref{eq:Bn-gen}).

For part (c) we first note that $p^B_{n,n+1}(x) = 
h(\cC_{n,n+1},x) = h^\circ(\cC_{n,0}, x) = x^{n+1} 
h(\cC_{n,0}, 1/x) = x^{n+1} B_n(1/x) = xB_n(x)$. The 
identity for the sum of the $p^B_{n,k}(x)$ can be 
verified directly by summing that of part (f). For 
a more conceptual proof, one can use an obvious 
shelling of the boundary complex of 
the $(n+1)$-dimensional cube to write, as explained 
in \cite[Section~3]{HS20+}, the $h$-polynomial 
$B_{n+1}(x)$ of its barycentric subdivision as a 
sum of $h$-polynomials of relative simplicial 
complexes, each one combinatorially isomorphic to 
one of the $\cC_{n,k}$. The details are left to 
the interested reader.

Given (c), the recursion of part (e) follows easily
by induction on $k$ from part (d) (this parallels
the proof of \cite[Lemma~6.3]{Ath20+}).

Part (b) is a consequence of Definition~\ref{def:pBnk}
and Proposition~\ref{prop:hsymmetry}. Alternatively, 
it follows from part (f) and standard facts 
about rational generating functions; see 
\cite[Proposition~4.2.3]{StaEC1}. The nonnegativity
of the coefficients of $p^B_{n,k}(x)$, claimed in part
(a), follows from the recursion of part (e), as well 
as from general results \cite[Corollary~III.7.3]{StaCCA} 
on the nonnegativity of $h$-vectors of Cohen--Macaulay
relative simplicial complexes. The statement about the 
degree of $p^B_{n,k}(x)$, claimed there, follows from 
either of parts (d), (e) or (f).
\end{proof}

We leave the problem to find a combinatorial 
interpretation of $p^B_{n,k}(x)$ open. Given part
(c) of the proposition, one naturally expects that
there is such an interpretation which refines one 
of the known combinatorial interpretations of 
$B_{n+1}(x)$.

The following statement is a consequence of a more
general result \cite[Proposition~7.6]{Sta92} of 
Stanley on subdivisions of CW-posets. To keep this 
paper self-contained, we include a proof.
\begin{proposition} \label{prop:sdc-h}
For every $n$-dimensional cubical complex $\lL$, 
\[ h(\sd(\lL), x) \ = \ (1-x)^{n+1} \, + \, x \, 
   \sum_{k=0}^n f_k(\lL) (1-x)^{n-k} B_k(x) . \]
\end{proposition}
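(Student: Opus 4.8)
The plan is to compute $h(\sd(\lL),x)$ directly from the defining chain-description of the barycentric subdivision, organizing the count of simplices of $\sd(\lL)$ by their bottom element. First I would recall that the faces of $\sd(\lL)$ are chains $\sigma_0 \subset \sigma_1 \subset \cdots \subset \sigma_j$ of nonempty faces of $\lL$, together with the empty chain. The key observation is that such a chain is either empty, or has a unique minimal element $\sigma_0$; if $\dim \sigma_0 = k$, then the remaining portion $\sigma_0 \subset \sigma_1 \subset \cdots \subset \sigma_j$ corresponds (after deleting $\sigma_0$) to a face of $\sd(\lL)$ lying in the link of the vertex $\sigma_0$, which is the barycentric subdivision of the poset of faces of $\lL$ strictly containing $\sigma_0$. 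That upper interval is isomorphic to the face poset of the boundary complex of an $(n-k-1)$-sphere when $\sigma_0$ is a $k$-cube sitting inside $n$-dimensional faces — but more usefully, the order complex of faces strictly containing a fixed $k$-dimensional cube $\sigma_0$ in $\lL$ is exactly $\sd$ of that upper interval, and I only need its $f$-vector, which I can feed into the standard $f$-to-$h$ conversion.

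The cleanest route, and the one I would pursue, is to work at the level of $f$-polynomials and convert at the end. Write $f(\sd(\lL), x) = \sum_{F} x^{|F|}$ over faces $F$ of $\sd(\lL)$. Splitting off the empty chain gives a $1$, and grouping the nonempty chains by their minimum $\sigma_0$ of dimension $k$ gives $\sum_{k=0}^n f_k(\lL) \cdot x \cdot f(\link_{\sd(\lL)}(\hat\sigma_0), x)$, where $\hat\sigma_0$ is the vertex of $\sd(\lL)$ corresponding to $\sigma_0$ and the extra factor $x$ accounts for $\sigma_0$ itself in the chain. The link of $\hat\sigma_0$ is the barycentric subdivision of the interval $(\sigma_0, \hat 1)$ in the augmented face poset, i.e. of the face poset of the ``upper link'' of $\sigma_0$ in $\lL$; since $\lL$ is cubical and $\sigma_0$ is a $k$-cube, every face of $\lL$ properly containing $\sigma_0$ is a cube of dimension between $k+1$ and $n$, and combinatorially this upper interval looks like the face poset of the boundary of an $(n-k)$-dimensional cross-polytope-like object. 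Rather than identify it precisely, I would only need that $h(\sd(\text{that poset}), x) = B_{n-k}(x)$ — equivalently that the $h$-polynomial of the barycentric subdivision of the relevant upper interval is the type $B$ Eulerian polynomial; this is exactly the statement already recorded in Section~\ref{sec:shell} that $h(\sd(Q),x) = h(\sd(\partial Q),x) = B_n(x)$ for the $n$-cube $Q$, applied to the ``dual'' structure above $\sigma_0$.

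Concretely, I would establish: for a $k$-dimensional face $\sigma_0$ of an $n$-dimensional cubical complex, the order complex of $\{\tau \in \lL : \sigma_0 \subsetneq \tau\}$ has $h$-polynomial $B_{n-k}(x)$ as a relative/absolute complex of dimension $n-k-1$, hence $f$-polynomial obtained by the inverse transform $f(\cdot, x) = \sum x^{i}$ with $(1-x)^{n-k} f(\text{that}, \frac{x}{1-x}) = B_{n-k}(x)$. Substituting into the grouped sum and then applying $(1-x)^{n+1} \cdot \big(f(\sd(\lL), \tfrac{x}{1-x})\big)$, the empty-chain term contributes $(1-x)^{n+1}$, and the $k$-th term contributes $x\, f_k(\lL)\, (1-x)^{n-k} B_k(x)$ after the exponents bookkeeping collapses (the $(1-x)^{n+1}$ prefactor splits as $(1-x)\cdot(1-x)^{n-k}\cdot(1-x)^{k}$, where $(1-x)^{n-k}$ pairs with the link's $f$-polynomial to give $B_{n-k}(x)$ — wait, this needs care with indices). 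Here is where I would be most careful: matching the dimension of the link ($n-k-1$) with the correct power of $(1-x)$ so that $B_{n-k}(x)$ appears with the right argument, and confirming that it is $B_k(x)$ versus $B_{n-k}(x)$ that survives — I expect the symmetry $B_m(x) = x^m B_m(1/x)$ together with the $x \leftrightarrow \frac{x}{1-x}$ substitution to convert one into the other, landing on the stated $B_k(x)$.

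The main obstacle I anticipate is the combinatorial identification of the upper interval $(\sigma_0, \hat 1)$ in a general cubical \emph{regular cell complex} (not just a cube or polytope) and the justification that its order complex carries $h$-polynomial $B_{n-k}(x)$. For a single cube this is the cited fact, but for an arbitrary cubical regular cell complex one must know that the faces strictly above a fixed $k$-cube, ordered by inclusion, form a poset whose order complex triangulates an $(n-k-1)$-sphere (or ball) with the cube's own combinatorics — essentially the statement that cubical complexes are ``cubical'' not just face-by-face but in their intervals. I would handle this by reducing to the face poset of the interval, noting that in any regular cell complex the interval $[\sigma_0, \tau]$ for $\sigma_0 \subsetneq \tau$ is the face poset of $\partial$ of a cell, and using that Stanley's Proposition~7.6 of \cite{Sta92} (cited just before the statement) already packages exactly this bookkeeping for subdivisions of CW-posets; indeed the honest shortcut is to quote \cite[Proposition~7.6]{Sta92} and merely transcribe its conclusion into the cubical normalization, which is what the excerpt says the self-contained proof is doing. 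So the realistic plan is: set up the bottom-element decomposition, invoke that each upper interval's barycentric subdivision has the Eulerian-$B$ $h$-polynomial (via the cube case plus the CW-poset structure of intervals), and finish with the $f$-to-$h$ substitution, watching the index arithmetic.
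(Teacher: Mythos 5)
Your decomposition goes in the wrong direction, and this is a genuine gap, not just a bookkeeping issue. You group the chains $\sigma_0\subset\cdots\subset\sigma_j$ by their \emph{minimal} element $\sigma_0$, which forces you to control the upper link $\{\tau\in\lL : \sigma_0\subsetneq\tau\}$. But in a general cubical complex this upper interval is not determined by $n$ and $\dim\sigma_0$ at all: two vertices of the same complex can lie in different numbers of squares, so their upper links have different $f$-polynomials, and already your grouped sum $\sum_k f_k(\lL)\,x\,f(\link_{\sd(\lL)}(\hat\sigma_0),x)$ is ill-defined (it tacitly assumes all $k$-faces have the same link). Moreover, even in the one case where the upper interval is controlled --- a $k$-face inside a single $n$-cube --- it is a Boolean lattice (the interval $[F,Q]$ in the face lattice of the cube is isomorphic to the face lattice of an $(n-k)$-\emph{simplex}), so its order complex has the type $A$ Eulerian polynomial as $h$-polynomial, not $B_{n-k}(x)$; the cube structure of $\lL$ controls \emph{lower} intervals, not upper ones. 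The hoped-for conversion of $B_{n-k}(x)$ into $B_k(x)$ via symmetry and the $x\mapsto x/(1-x)$ substitution also cannot work, since these are genuinely different polynomials for $k\neq n-k$.

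The paper's proof avoids all of this by grouping each chain by its \emph{maximal} element $\sigma$: a chain with top $\sigma$ is exactly an interior face of the restriction $\sd(\sigma)$, so $f(\sd(\lL),x)=1+\sum_{\sigma\neq\varnothing}f^\circ(\sd(\sigma),x)$, and now the relevant complex $\sd(\sigma)$ is the barycentric subdivision of a $k$-cube, for which $h(\sd(\sigma),x)=B_k(x)$ is known and $h^\circ(\sd(\sigma),x)=x^{k+1}B_k(1/x)=xB_k(x)$ by the symmetry of Proposition~\ref{prop:hsymmetry}. Converting $f$- to $h$-polynomials then gives the stated formula directly, with the factor $(1-x)^{n-k}$ coming from the dimension gap. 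If you reverse your decomposition to use the top element and interior faces, the rest of your plan (split off the empty chain, convert at the end) does go through and is essentially the paper's argument; as written, the central identity you need is false and the proof does not close. Falling back on quoting \cite[Proposition~7.6]{Sta92} is legitimate but is precisely what the paper's self-contained proof is meant to replace.
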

\begin{proof} 
Since every face of $\sd(\lL)$ is an interior face 
of the restriction $\sd(\sigma)$ of $\sd(\lL)$ to 
a unique face $\sigma \in \lL$, we have
\[ f(\sd(\lL), x) \ = \ \sum_{\sigma \in \lL} f^\circ
   (\sd(\sigma), x) \ = \ 1 \, + \sum_{\sigma \in \lL 
	 \sm \{\varnothing\}} f^\circ(\sd(\sigma), x) . \]
Transforming $f$-polynomials to $h$-polynomials in
this equation and recalling from Section~\ref{sec:enu} 
that $h^\circ(\sd(\sigma), x) = x^{k+1} h(\sd(\sigma), 
1/x) = x^{k+1} B_k(1/x) = x B_k(x)$ for every nonempty 
$k$-dimensional face $\sigma \in \lL$, we get

\begin{eqnarray*}
h(\sd(\lL), x) & = & (1-x)^{n+1} f(\sd(\lL), 
\frac{x}{1-x}) \\ & = & (1-x)^{n+1} \ + 
\sum_{\sigma \in \lL \sm \{\varnothing\}} (1-x)^{n+1} 
f^\circ(\sd(\sigma), \frac{x}{1-x}) \\ & = & 
(1-x)^{n+1} \ + \sum_{\sigma \in \lL \sm \{\varnothing\}} 
(1-x)^{n-\dim(\sigma)} \, h^\circ(\sd(\sigma), x) \\ & = & 
(1-x)^{n+1} \, + \ \sum_{k=0}^n f_k(\lL) (1-x)^{n-k} x 
B_k(x) 
\end{eqnarray*}
and the proof follows.
\end{proof}

\medskip
\noindent
\emph{Proof of Theorem~\ref{thm:h-trans}}. Let us 
denote by $p(\lL, x)$ the right-hand side of the desired
equality. Clearly, it suffices to show that $h(\sd(\lL), 
x)/(1-x)^{n+1} = p(\lL, x)/(1-x)^{n+1}$. From 
Proposition~\ref{prop:sdc-h} and 
Equation~(\ref{eq:Bn-gen}) we deduce that 
\begin{eqnarray*} 
\frac{h(\sd(\lL), x)}{(1-x)^{n+1}} & = & 1 \, + \, x \, 
\sum_{k=0}^n f_k(\lL) \, \frac{B_k(x)}{(1-x)^{k+1}} \ = 
\ 1 \, + \, \sum_{m \ge 0} \left( \, \sum_{k=0}^n 
f_k(\lL) (2m+1)^k \right) x^{m+1} \nonumber \\ & = & 
1 \, + \, \sum_{m \ge 1} \left( \, \sum_{k=0}^n 
f_k(\lL) (2m-1)^k \right) x^m .
\end{eqnarray*} 

\medskip
Similarly, from part (f) of Proposition~\ref{prop:pBnk}
we get
\[ \frac{p(\lL, x)}{(1-x)^{n+1}} \ = \ \sum_{k=0}^{n+1} 
h_k(\lL) \, \frac{p^B_{n,k}(x)}{(1-x)^{n+1}} \ = \ 1 \, + 
\, \sum_{m \ge 1} a_\lL(m) x^m, \]
where 
\[ a_\lL(y) \ := \ h_0(\lL) (2y+1)^n \, + \, \sum_{k=1}^n 
h_k(\lL) (4y) (2y-1)^{k-1} (2y+1)^{n-k} \, + \, h_{n+1}(\lL) 
(2y-1)^n. \]

Thus, it remains to show that 
\begin{equation} \label{eq:final}
\sum_{k=0}^n f_k(\lL) (2y-1)^k \ = \ a_\lL(y) . 
\end{equation}
We claim that this is, essentially, the defining 
Equation~(\ref{def:cub-h}) of the cubical $h$-polynomial 
of $\lL$ in disguised form. Indeed, cancelling first 
the summand $1 + h_{n+1}(\lL) x^{n+2} = 1 + (-1)^n
\widetilde{\chi} (\lL) x^{n+2}$, and then a factor of 
$x$, from both sides of (\ref{def:cub-h}) gives that
\[ \sum_{k=0}^n (h_k(\lL) + h_{k+1}(\lL)) x^k \ = \ 
   \left( \frac{1-x}{2} \right)^n \, \sum_{k=0}^n 
   f_k(\lL) \left( \frac{2x}{1-x} \right)^ k . \]
Setting $2x/(1-x) = 2y-1$, so that $x = (2y-1)/(2y+1)$ 
and $(1-x)/2 = 1/(2y+1)$, the previous identity can 
be rewritten as 
\begin{equation} \label{def:cub-h2}
\sum_{k=0}^n f_k(\lL) (2y-1)^k \ = \ \sum_{k=0}^n 
\, (h_k(\lL) + h_{k+1}(\lL)) (2y-1)^k (2y+1)^{n-k} .
\end{equation}
Since the right-hand side is readily equal to 
$a_\lL(y)$, this proves Equation~(\ref{eq:final}) 
and the theorem as well.
\qed

\medskip
To deduce Theorem~\ref{thm:main} from 
Theorem~\ref{thm:h-trans} and 
Proposition~\ref{prop:pBnk}, we need to recall a few 
definitions and facts from the theory of 
interlacing polynomials; for more information, see 
\cite[Section~8]{Bra15} and references therein. A 
polynomial $p(x) \in \RR[x]$ is called 
\emph{real-rooted} if either it is the zero 
polynomial, or every complex 
root of $p(x)$ is real. Given two real-rooted 
polynomials $p(x), q(x) \in \RR[x]$, we say that 
$p(x)$ \emph{interlaces} $q(x)$ if the roots 
$\{\alpha_i\}$ of $p(x)$ interlace (or alternate 
to the left of) the roots $\{\beta_j\}$ of $q(x)$, 
in the sense that they can be listed as
\[ \cdots \le \beta_3 \le \alpha_2 \le \beta_2 \le 
       \alpha_1 \le \beta_1 \le 0. \]

A sequence $(p_0(x), p_1(x),\dots,p_n(x))$ of 
real-rooted polynomials is called \emph{interlacing} 
if $p_i(x)$ interlaces $p_j(x)$ for all $0 \le i < j 
\le n$. Assuming also that these polynomials have 
positive leading coefficients, every nonnegative 
linear combination of $p_0(x), p_1(x),\dots,p_n(x)$ 
is real-rooted and interlaced by $p_0(x)$. A standard 
way to produce interlacing sequences in combinatorics 
is the following. Suppose that $p_0(x), 
p_1(x),\dots,p_n(x)$ are real-rooted polynomials 
with nonnegative coefficients and set 
\[ q_k(x) \ = \ x \sum_{i=0}^{k-1} p_i(x) \, + \, 
   \sum_{i=k}^n p_i(x) \]
for $k \in \{0, 1,\dots,n+1\}$. Then, if the sequence 
$(p_0(x), p_1(x),\dots,p_n(x))$ is interlacing, so is 
$(q_0(x), q_1(x),\dots,q_{n+1}(x))$; see 
\cite[Corollary~8.7]{Bra15} for a more general 
statement.

\smallskip
The following result is a stronger version of 
Theorem~\ref{thm:main}.
\begin{corollary} \label{cor:main}
The polynomial $h(\sd(\lL), x)$ is real-rooted and 
interlaced by the Eulerian polynomial $B_n(x)$ for 
every $n$-dimensional cubical complex $\lL$ with 
nonnegative cubical $h$-vector. 

In particular, $h(\sd(\lL), x)$ and $h(\sd(Q), x)$
are real-rooted and interlaced by $B_n(x)$ for every 
shellable, $n$-dimensional cubical complex $\lL$ and  
every cubical polytope $Q$ of dimension $n+1$, 
respectively.
\end{corollary}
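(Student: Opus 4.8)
The plan is to derive Corollary~\ref{cor:main} by combining the $h$-vector transformation formula of Theorem~\ref{thm:h-trans} with an interlacing property of the polynomials $p^B_{n,k}(x)$, obtained in turn from the recurrences of Proposition~\ref{prop:pBnk}, and then to read off the two ``in particular'' assertions from known facts about cubical $h$-vectors and shellability already quoted in the introduction.

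\medskip
First I would establish, by induction on $n$, that the sequence $(p^B_{n,0}(x), p^B_{n,1}(x), \dots, p^B_{n,n+1}(x))$ is an interlacing sequence of real-rooted polynomials with nonnegative (hence positive leading) coefficients. The base case $n=0$ is the pair $(1,x)$, which is trivially interlacing. For the inductive step I would invoke part~(e) of Proposition~\ref{prop:pBnk}: writing $q_k(x) = x\sum_{i=0}^{k-1} p^B_{n,i}(x) + \sum_{i=k}^{n+1} p^B_{n,i}(x)$, the recurrence there expresses $p^B_{n+1,k}(x)$ for $1 \le k \le n+1$ as $2\bigl(x\sum_{i=0}^{k-1}p^B_{n,i} + \sum_{i=k}^{n+1}p^B_{n,i}\bigr) = 2 q_k(x)$, while $p^B_{n+1,0}(x) = \sum_{i=0}^{n+1} p^B_{n,i}(x) = q_0(x)$ and $p^B_{n+1,n+2}(x) = x\sum_{i=0}^{n+1}p^B_{n,i}(x) = q_{n+2}(x)$. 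The standard tool quoted just before the corollary (from \cite[Corollary~8.7]{Bra15}) says exactly that if $(p^B_{n,0},\dots,p^B_{n,n+1})$ is interlacing with nonnegative coefficients, then $(q_0, q_1, \dots, q_{n+2})$ is interlacing; positive scaling by $2$ preserves interlacing, so $(p^B_{n+1,0},\dots,p^B_{n+1,n+2})$ is interlacing. This closes the induction, and the nonnegativity of coefficients is already Proposition~\ref{prop:pBnk}(a).

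\medskip
With that in hand, the first sentence of the corollary is immediate: by Theorem~\ref{thm:h-trans}, $h(\sd(\lL),x) = \sum_{k=0}^{n+1} h_k(\lL)\, p^B_{n,k}(x)$ is a nonnegative linear combination of an interlacing sequence of polynomials with positive leading coefficients (using the hypothesis that the cubical $h$-vector of $\lL$ is nonnegative), hence real-rooted and interlaced by the first member $p^B_{n,0}(x)$, which equals $B_n(x)$ by Proposition~\ref{prop:pBnk}(c). For the ``in particular'' clause, Adin's theorem \cite[Theorem~5~(iii)]{Ad96} gives that every shellable $n$-dimensional cubical complex has nonnegative cubical $h$-vector, so the general statement applies to such $\lL$. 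For a cubical polytope $Q$ of dimension $n+1$, its boundary complex $\partial Q$ is a shellable (by Bruggesser--Mani \cite{BM71}) $n$-dimensional cubical complex, and since $\sd(Q)$ is a cone over $\sd(\partial Q)$ we have $h(\sd(Q),x) = h(\sd(\partial Q),x)$; applying the result to $\lL = \partial Q$ finishes the proof.

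\medskip
The only genuinely substantive step is the inductive verification that Proposition~\ref{prop:pBnk}(e) has precisely the shape required by the interlacing-preservation lemma; everything else is bookkeeping. The main obstacle, then, is making sure the indexing matches up: Proposition~\ref{prop:pBnk}(e) produces a length-$(n+3)$ sequence $(p^B_{n+1,0},\dots,p^B_{n+1,n+2})$ from the length-$(n+2)$ sequence at level $n$, with the leading index $n+2$ coming from the ``$q_{n+2}$'' slot, so one should double-check that the stated recurrence indeed covers all of $k \in \{0,1,\dots,n+2\}$ and that the factor-of-$2$ discrepancy between the $k=0$, $k=n+2$ cases and the middle cases is harmless for interlacing (it is, since multiplying every member of an interlacing sequence by its own positive constant still yields an interlacing sequence, and here only two members are rescaled). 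I would also note in passing that real-rootedness of $f(\Delta/\Gamma,x)$ and $h(\Delta/\Gamma,x)$ are equivalent (recorded in Section~\ref{sec:enu}), so the conclusion transfers to the $f$-polynomial of $\sd(\lL)$ as well, though this is not needed for the statement.
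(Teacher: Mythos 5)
Your proof is correct and follows essentially the same route as the paper: induction on $n$ via Proposition~\ref{prop:pBnk}(e) and the quoted interlacing-preservation lemma to show $(p^B_{n,0},\dots,p^B_{n,n+1})$ is interlacing, then Theorem~\ref{thm:h-trans} with nonnegativity of the cubical $h$-vector, and finally Adin's theorem, Bruggesser--Mani and the cone identity $h(\sd(Q),x)=h(\sd(\partial Q),x)$ for the ``in particular'' clause. Your explicit checks of the indexing and of the harmless factor of $2$ in the middle terms of the recurrence are details the paper leaves implicit, but the argument is the same.
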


\begin{proof}
By an application of the lemma on interlacing 
sequences just discussed, the recurrence of part 
(e) of Proposition~\ref{prop:pBnk} implies that 
$(p^B_{n,0}(x), p^B_{n,1}(x),\dots,p^B_{n,n+1}(x))$ 
is interlacing for every $n \in \NN$ by induction 
on $n$. Therefore, being a nonnegative linear 
combination of the elements of the sequence by 
Theorem~\ref{thm:h-trans}, $h(\sd(\lL), x)$ is 
real-rooted and interlaced by $p^B_{n,0}(x) = 
B_n(x)$ for every $n$-dimensional cubical complex 
$\lL$ with nonnegative cubical $h$-vector. This
proves the first statement.

The second statement follows from the first since 
shellable cubical complexes are known to have 
nonnegative cubical $h$-vector 
\cite[Theorem~5~(iii)]{Ad96}, $h(\sd(Q), x) = 
h(\sd(\partial Q), x)$ for every convex polytope 
$Q$ and because boundary complexes of polytopes
are shellable.
\end{proof}

\begin{remark} \label{rem:simplicial} \rm
Let $\Delta$ be a simplicial complex with nonnegative 
$h$-vector and $\lL$ be a cubical complex which is 
obtained from $\Delta$ by any operation which 
preserves nonnegativity of $h$-vectors.
Corollary~\ref{cor:main} implies that $h(\sd(\lL), 
x)$ is real-rooted. 

By a result of 
Hetyei~\cite{Het96}, such an operation is the 
cubical barycentric subdivision $\lL = \sd_c(\Delta)$
(see \cite[p.~44]{Ath18}), also known as barycentric 
cover \cite[Section~2.3]{BBC97}, of $\Delta$. Then,
$\sd(\lL)$ becomes the interval triangulation 
of $\Delta$ \cite[Section~3.3]{MW17}. This argument 
shows that the interval triangulation of $\Delta$ 
has a real-rooted $h$-polynomial for every simplicial 
complex $\Delta$ with nonnegative $h$-vector and
answers in the affirmative the question 
of \cite[Problem~33]{MW17}. Although there are 
other proofs of this fact in the literature (see
\cite{Ath20+} and references therein), the approach 
via Corollary~\ref{cor:main} allows for 
more general results, e.g., by applying further 
cubical subdivisions of $\sd_c(\Delta)$ which 
preserve the nonnegativity of the cubical 
$h$-vector. 
\end{remark}

\begin{remark} \label{rem:BW-formula} \rm
Applying the reasoning of the proof of 
Proposition~\ref{prop:sdc-h} and of the first few lines
of the proof of Theorem~\ref{thm:h-trans} to an 
$n$-dimensional simplicial complex $\Delta$ gives 
that
\[ h(\sd(\Delta), x) \ = \ (1-x)^{n+1} \, + \, x \, 
   \sum_{k=0}^n f_k(\Delta) (1-x)^{n-k} A_{k+1}(x) \]
and 
\[
\frac{h(\sd(\Delta), x)}{(1-x)^{n+2}} \ = \
\sum_{m \ge 0} \left( \, \sum_{k=0}^{n+1} f_{k-1}
(\Delta) m^k \right) x^m \ = \ \sum_{m \ge 0} 
\left( \, \sum_{k=0}^{n+1} h_k(\Delta) m^k 
(m+1)^{n+1-k} \right) x^m . \]
This is the expression at which Brenti and Welker 
arrived \cite[Equation~(5)]{BW08} via a different 
route and which they used to show that $h(\sd(\Delta), 
x)$ has only real roots, provided that $h_k(\Delta) 
\ge 0$ for all $k$.
\end{remark}

\begin{remark} \rm
Replacing $2y-1$ by $x$ in (\ref{def:cub-h2})
shows that the equation 
\[ \sum_{k=0}^n f_k(\lL) x^k \ = \ \sum_{k=0}^n 
\, (h_k(\lL) + h_{k+1}(\lL)) \, x^k (x+2)^{n-k} , \]
together with the condition $h_0(\lL) = 1$, gives
an equivalent way to define the normalized cubical 
$h$-vector of an $n$-dimensional cubical complex $\lL$.
\end{remark}

\section{Closing remarks}
\label{sec:rem}

There is a large literature on the barycentric 
subdivision of simplicial complexes which relates 
to the work \cite{BW08}. Many of the questions 
addressed there make sense for cubical complexes. 
We only consider a couple of them here.

\medskip
\textbf{a}. Being real-rooted, $h(\sd(\Delta), x)$ 
is unimodal for every $n$-dimensional simplicial 
complex $\Delta$ with nonnegative $h$-vector. 
Kubitzke and Nevo showed~\cite[Corolalry~4.7]{KN09} 
that the corresponding $h$-vector 
$(h_i(\sd(\Delta))_{0 \le i \le n+1}$ has a peak 
at $i = (n+1)/2$, if $n$ is odd, and at $i = n/2$ 
or $i = n/2 + 1$, if $n$ is even. The analogous 
statement for cubical complexes follows from 
Theorem~\ref{thm:h-trans} since, as in the 
simplicial setting, the 
unimodal polynomial $p^B_{n,k}(x)$ has a peak at 
$i = (n+1)/2$, if $n$ is odd, at $i = n/2$ if $n$ 
is even and $k \le n/2$, and at $i = n/2 + 1$, if 
$n$ is even and $k \ge n/2 + 1$. The latter claim 
can be deduced from the recursion 
of part (e) of Proposition~\ref{prop:pBnk} by 
mimicking the argument given in the simplicial 
setting in~\cite[Section~2]{Mur10}. For general 
results on the unimodality of $h$-vectors of 
barycentric subdivisions of Cohen--Macaulay 
regular cell complexes, proven by algebraic methods,
see Corollaries~1.2 and~5.12 in \cite{MY14}.

\textbf{b}. The main result of~\cite{BS20} implies 
(see~\cite[Section~8]{Ath20+}) that 
$h(\sd(\Delta), x)$ has a nonnegative real-rooted 
symmetric decomposition with respect to $n$ for 
every triangulation $\Delta$ of an $n$-dimensional 
ball. Does this hold if $\Delta$ is replaced by 
any cubical subdivision of the $n$-dimensional ball? 
Are these symmetric decompositions interlacing? Do
the polynomials $p^B_{n,k}(x)$ have such 
properties?

\textbf{c}. The subdivision operator (see 
\cite[Section~3.3]{Bra15}) has a natural generalization 
in the context of uniform triangulations of simplicial
complexes \cite[Section~5]{Ath20+} which plays a role
in that theory. It may be worth studying the cubical 
analogue of this operator further.

%%\bigskip
%%\noindent \textbf{Acknowledgments}. 

\end{document}